\documentclass[11pt]{amsart}

\title[Thurston's weak metric]{Tangent spaces of the Teichm\"{u}ller space of the torus with Thurston's weak metric}
\author[Miyachi, Ohshika and Papadopoulos]{Hideki Miyachi, Ken'ichi Ohshika, and Athanase Papadopoulos}
\address{Hideki Miyachi,
School of Mathematics and Physics,
College of Science and Engineering,
Kanazawa University,
Kakuma-machi, Kanazawa,
Ishikawa, 920-1192, Japan}
\email{miyachi@se.kanazawa-u.ac.jp} 
\address{Ken'ichi Ohshika,
Department of Mathematics,
Gakushuin University,
Mejiro, Toshima-ku, Tokyo, Japan}
  \email{ohshika@math.gakushuin.ac.jp}
\address{Athanase Papadopoulos, 
Institut de Recherche Math\'ematique Avanc\'ee (Universit\'e de Strasbourg et CNRS),
7 rue Ren\'e Descartes
67084 Strasbourg Cedex France}
 \email{papadop@math.unistra.fr}
\date{\today}

\thanks{This work is supported by JSPS KAKENHI Grant Numbers
16K05202, partially, 16H03933, 17H02843}
\keywords{Thurston metric, Teichm\"uller space,  Teichm\"uller metric, Finsler manifold}
\subjclass[2010]{53B40, 30F60,  32G15}

\setcounter{tocdepth}{1}

\makeatletter
\@addtoreset{equation}{section}

\makeatother

\usepackage{amsfonts, amsmath, amssymb, amsthm}
\usepackage{url}
\usepackage[dvipdfmx]{graphicx}
\usepackage[dvipdfmx]{color}
\usepackage{amscd}
\usepackage[right]{lineno}
\usepackage[capitalize]{cleveref}
\modulolinenumbers[5]

\usepackage{enumerate}

\newtheorem{theorem}{Theorem}[section]
\newtheorem{lemma}{Lemma}[section]
\newtheorem{proposition}{Proposition}[section]
\newtheorem{corollary}{Corollary}[section]

\newcommand{\teich}{\mathcal{T}}

\newcommand{\torus}{T^2}

\newcommand{\SSS}{\mathcal{S}}

\newcommand{\hyperbolic}{\mathbb{H}}
\newcommand{\complexes}{\mathbb{C}}
\newcommand{\reals}{\mathbb{R}}
\newcommand{\integers}{\mathbb{Z}}
\renewcommand{\l}{\operatorname{length}}
\renewcommand{\Im}{\mathrm{Im}}

\begin{document}
\maketitle

\begin{abstract}
In this paper, we show that the analogue of Thurston's asymmetric metric on the Teichm\"uller space of flat structures on the torus is weak Finsler and we give a geometric description of its  unit circle at each point in the tangent space to Teichm\"uller space. 
We then introduce a family of weak Finsler metrics which interpolate between Thurston's asymmetric metric and the Teichm\"uller metric of the torus (which coincides with the the hyperbolic metric). We describe the unit tangent circles of the metrics in this family.

The final version of this paper will appear in Annales Academi\ae  \   Scientiarum Fennic\ae \  Mathematica.
   \medskip
    \end{abstract}

\section{Preliminaries}
\label{Preliminaries}

We shall use the following identification between the Teichm\"{u}ller space of the torus and the upper half-plane model of the hyperbolic plane $\hyperbolic$:

Let $\torus$ be a two-dimensional torus and fix a pair of generators $a, b$ of $\pi_1(\torus)$ represented by two simple closed curves on this surface intersecting at one point.
The Teichm\"{u}ller space of $\torus$, denoted by $\teich(\torus)$, is the set of equivalence classes of pairs $(\Sigma, f)$, where $\Sigma$ is a Riemann surface and $f: \torus \to \Sigma$ a homeomorphism, and where two pairs $(\Sigma_1, f_1), (\Sigma_2, f_2)$ are defined to be equivalent when $f_1 \circ f_2^{-1}$ is isotopic to a biholomorphism.
From the uniformisation theorem, for every point $x$ in $\teich(\torus)$, there is a unique complex number $\zeta$ with $\Im(\zeta) >0$ such that $x$ is represented by the pair $(\complexes/(\integers + \zeta \integers), f)$, where $f$ is a homeomorphism taking the homotopy class of $a,b$ to $1, \zeta \in \integers + \zeta \integers=\pi_1(\complexes/(\integers + \zeta \integers))$ respectively.
In this way, $\teich(\torus)$ is identified with $\hyperbolic=\{z \in \complexes \mid \Im(z) >0\}$. This identification induces an isometry when the Teichm\"uller space  $\teich(\torus)$ is equipped with the Teichm\"uller metric and $\hyperbolic$  is equipped with the metric of constant curvature $-4$. In the sequel,  we shall refer to this metric on  $\hyperbolic$ as the \emph{hyperbolic metric} $d_{hyp}$. The isometry between the  space $\teich(\torus)$ equipped with the so-called Teichm\"uller metric and the space $\hyperbolic$  equipped with the hyperbolic metric is a result of Teichm\"uller, see \cite[Section 9]{Teich} and \cite[Section 9]{TeichTr} for an English translation of Teichm\"uller's paper.

We also need the following notion:

A \emph{weak metric} $\delta$ on a set $X$ is a map $\delta: X\times X\to \reals$ satisfying the following:
\begin{enumerate}
\item  $\delta(x,x)= 0$ for every $x$ in $X$;
\item  $\delta(x,y)\geq 0$ for every $x$ and $y$  in $X$;
\item $\delta(x,y)+\delta(y,z)\geq \delta(x,z)$ for every $x$, $y$ and $z$ in $X$.
\end{enumerate}
In the paper \cite{BPT}, the following weak metric was introduced on $\hyperbolic$:
First, for $\zeta_1, \zeta_2 \in \hyperbolic$, we let 
\begin{eqnarray}
\label{M}
M(\zeta_1, \zeta_2)=\sup_{x\in \reals}\left|\dfrac{\zeta_2-x}{\zeta_1-x}\right|.
\end{eqnarray}
The weak metric $\delta$ is then defined by setting $\delta(\zeta_1, \zeta_2)=\log M(\zeta_1, \zeta_2)$.

In the same paper, the following explicit expression of $\delta$ was obtained:
\begin{equation}
\label{delta}
\delta(\zeta_1, \zeta_2)=\log \left( \frac{|\zeta_2- \bar \zeta_1|+|\zeta_2-\zeta_1|}{|\zeta_1- \bar \zeta_1|}\right).
\end{equation}

We note that this implies that \begin{equation}
\label{asymmetry}
\delta(\zeta_2, \zeta_1)=\delta(\zeta_1, \zeta_2)+\log\frac{\Im (\zeta_1)}{\Im (\zeta_2)}.
\end{equation}

It was also shown that this weak metric has the following two properties:
\begin{enumerate}
\item The arithmetic symmetrisation of the weak metric $\delta$, that is, the weak metric $S\delta$ defined by 
\[ S\delta (\zeta_1,\zeta_2)=\frac{1}{2}\left( \delta (\zeta_1,\zeta_2) + \delta (\zeta_2,\zeta_1)\right)
\]
is a genuine metric and coincides with the hyperbolic metric of the upper half-plane.
\item The weak metric $\delta$ is an analogue for the torus of Thurston's asymmetric metric on Teichm\"uller space.
\end{enumerate}

The last statement needs some explanation, and we give it now.

For any two points $z_1, z_2$ in the Teichm\"{u}ller space $\teich(\torus)$, we take representatives $(\Sigma_1=\complexes/(\integers+\zeta_1 \integers), f_1), (\Sigma_2=\complexes/(\integers+ \zeta_2 \integers), f_2)$, and we regard them as tori equipped with the quotient flat metrics induced by the flat metric of the Euclidean plane.
We set $\delta(z_1, z_2)=\delta(\zeta_1, \zeta_2)$.
In \cite{BPT}, a weak metric  on $\teich(\torus)$ was defined as follows.
Let $\SSS(\torus)$ denote the set of homotopy classes of essential simple closed curves on the torus. We set,
\begin{equation}
\label{kappa}
\kappa(z_1,z_2)=\log \sup_{s \in \SSS(\torus)}\frac{\l_{\Sigma_2}(f_2(s))}{\l_{\Sigma_1}(f_1(s))},
\end{equation}
where $\l$ denotes the length of the closed geodesic in the corresponding homotopy class.
The formula for $\kappa(z_1,z_2)$ is the analogue, in this Euclidean setting, of the formula for Thurston's metric in the hyperbolic setting given in \cite[p. 8]{Thurston1986}.
Theorem 3 of \cite{BPT} says the following:
\begin{equation}
\label{kappa}
\kappa(z_1,z_2)=\delta(\zeta_1,\zeta_2)
\end{equation}
for any $z_1, z_2 \in \teich(\torus)$ and $z_i=(\Sigma_i=\complexes/(\integers+\zeta_i \integers), f_i)$ for $i=1,2$.

The metric $\delta$ has another characterisation which is given in \cite{BPT}. For two metrics $g_1$, $g_2$ on $T_0=\complexes/\integers\oplus i\integers$ and a homeomorphism $\varphi\colon T_0\to T_0$, we define
$$
\mathcal{L}(\varphi)=\sup_{x\ne y}
\left(
\dfrac{d_{g_2}(\varphi(x),\varphi(y))/\l_{g_2}(s)}{d_{g_1}(x,y)/\l_{g_1}(s)}
\right),
$$
where $s$ is a non-trivial simple closed curve, 
and set
$$
\lambda(g_1,g_2)=\inf_\varphi(\log \mathcal{L}(\varphi)).
$$
The function $\lambda$ is invariant under the action of homeomorphisms on $T_0$ homotopic to the identity. Hence, $\lambda$ defines a weak metric on $\teich(\torus)$. The metric is called the \emph{normalised weak Lipschitz distance}. In \cite{BPT}, it was shown that $\kappa(\zeta_1,\zeta_2)=\lambda(\zeta_1,\zeta_2)$
for any $\zeta_1,\zeta_2\in \hyperbolic$.

In the rest of this paper, we investigate further properties of the weak metric $\kappa=\delta$. We first show that the  geodesics of the hyperbolic metric of $\hyperbolic$ are geodesics with respect to this weak metric. We then show that this metric is weak Finsler (in a sense we shall make precise) and we give a geometric description of its  unit circle at each point in the tangent space to Teichm\"uller space. 
We then introduce a family of weak Finsler metrics which interpolates between the weak metric $\delta$ and the hyperbolic metric (which coincides with the Teichm\"uller metric) which arises naturally from the construction given in this paper.  We describe the unit tangent circle at each point for each weak metric in this family.


\section{Geodesics for the weak metric $\delta$}

In this section, we give an explicit expression for the point where the supremum of  \eqref{M} is attained for given $\zeta_1, \zeta_2 \in \hyperbolic$ and show its geometric meaning.

First we note the following, which can be shown easily from the definition of $\delta$:
\begin{lemma}
\label{invariance}
For $\lambda>0$ and $\tau\in \reals$, we have
\begin{eqnarray}
\delta(\lambda\zeta_1+\tau,\lambda \zeta_2+\tau) &=\delta(\zeta_1, \zeta_2) ,
\label{eq:2}\\
\delta(-\overline{\zeta_1},-\overline{\zeta_2}) &=\delta(\zeta_1, \zeta_2).
\label{eq:3}
\end{eqnarray}
\end{lemma}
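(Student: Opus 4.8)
The plan is to work directly from the defining supremum
\[
M(\zeta_1,\zeta_2)=\sup_{x\in\reals}\left|\frac{\zeta_2-x}{\zeta_1-x}\right|,
\]
and, in each case, to exhibit a change of variable on the real line that leaves the whole family of quotients invariant. Since $\delta=\log M$ and $\log$ is monotone, it suffices to establish the two identities at the level of $M$.

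For \eqref{eq:2} I would substitute $x=\lambda y+\tau$. Because $\lambda>0$ and $\tau\in\reals$, the map $y\mapsto\lambda y+\tau$ is a bijection of $\reals$ onto itself, so the supremum over $x\in\reals$ equals the supremum over $y\in\reals$. A one-line computation then shows that the factors of $\lambda$ cancel, namely $\frac{(\lambda\zeta_2+\tau)-x}{(\lambda\zeta_1+\tau)-x}=\frac{\zeta_2-y}{\zeta_1-y}$, whence $M(\lambda\zeta_1+\tau,\lambda\zeta_2+\tau)=M(\zeta_1,\zeta_2)$ and \eqref{eq:2} follows.

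For \eqref{eq:3} I would first remark that $-\overline{\zeta}\in\hyperbolic$ whenever $\zeta\in\hyperbolic$, since reflection across the imaginary axis preserves the imaginary part, so that the left-hand side is well defined. I then substitute $x=-y$ (again a bijection of $\reals$) and use the reality of $y$ to pull a conjugation through the modulus: $\left|\frac{-\overline{\zeta_2}+y}{-\overline{\zeta_1}+y}\right|=\left|\,\overline{\tfrac{\zeta_2-y}{\zeta_1-y}}\,\right|=\left|\frac{\zeta_2-y}{\zeta_1-y}\right|$. Taking the supremum yields \eqref{eq:3}.

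The only obstacle is bookkeeping, and it is mild: one must check that the reparametrisations of the real line are genuine bijections (this is exactly where $\lambda>0$ and the reality of $\tau$ and $x$ are used) and that conjugation commutes with the modulus. Alternatively, both identities drop out just as quickly from the closed form \eqref{delta}: under $\zeta_i\mapsto\lambda\zeta_i+\tau$ each of the three moduli $|\zeta_2-\bar\zeta_1|$, $|\zeta_2-\zeta_1|$, $|\zeta_1-\bar\zeta_1|$ is multiplied by $\lambda$, so the ratio is unchanged; and under $\zeta_i\mapsto-\bar\zeta_i$ one verifies $|{-}\bar\zeta_2-({-}\zeta_1)|=|\zeta_2-\bar\zeta_1|$, $|{-}\bar\zeta_2-({-}\bar\zeta_1)|=|\zeta_2-\zeta_1|$, and $|{-}\bar\zeta_1-({-}\zeta_1)|=|\zeta_1-\bar\zeta_1|$, again leaving the ratio fixed. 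I would present the $M$-based argument as the main line and mention the closed-form check as a remark.
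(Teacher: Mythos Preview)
Your argument is correct and is exactly what the paper has in mind: the authors merely state that the lemma ``can be shown easily from the definition of $\delta$'' without writing out any details, and your change-of-variable computation on the supremum defining $M$ (together with the optional closed-form check via \eqref{delta}) is the natural way to carry this out.
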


For $\zeta_1=a+ib$ and $\zeta_2=\alpha+i\beta$ in $\hyperbolic$ with $\zeta_1\ne \zeta_2$,
we define
\begin{equation}
\label{eq:xpm}
x_\pm 
=\frac{\alpha^2+\beta^2-a^2-b^2}{2(\alpha-a)}\mp
\dfrac{\sqrt{(\alpha-a)^2+(\beta-b)^2}\sqrt{(\alpha-a)^2+(\beta+b)^2}}{2(\alpha-a)}
\end{equation}
if $a\ne \alpha$. 
When $a= \alpha$, we define
\begin{align*}
x_+
&=\begin{cases}
0 & (\beta>b) \\
\infty & (\beta<b),
\end{cases}
\\
x_-
&=\begin{cases}
\infty & (\beta>b) \\
0 & (\beta<b).
\end{cases}
\end{align*}

The following is an explicit expression  for the supremum in \eqref{M}:
\begin{proposition}
\label{maximum}
For $\zeta_1,\zeta_2\in \hyperbolic$ with $\zeta_1\ne \zeta_2$,
the supremum in \eqref{M} is attained at $x_+$.

\end{proposition}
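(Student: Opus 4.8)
The plan is to pass to the square of the quantity being maximised. Since $\log$ and the square root are increasing, maximising $M(\zeta_1,\zeta_2)=\sup_{x}|(\zeta_2-x)/(\zeta_1-x)|$ over $x\in\reals$ is the same as maximising
\[
g(x)=\frac{|\zeta_2-x|^2}{|\zeta_1-x|^2}=\frac{(\alpha-x)^2+\beta^2}{(a-x)^2+b^2}.
\]
This function is smooth and positive, its denominator is bounded below by $b^2>0$, and $g(x)\to 1$ as $x\to\pm\infty$. From $g(x)-1=\bigl((|\zeta_2|^2-|\zeta_1|^2)-2(\alpha-a)x\bigr)/|\zeta_1-x|^2$ one sees that, when $a\ne\alpha$, $g$ takes values larger than $1$ on a half-line; hence its supremum exceeds the limiting value at infinity and is attained at an interior critical point. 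So the first task is to find the critical points of $g$.

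Assume first $a\ne\alpha$. Writing $N,D$ for the numerator and denominator of $g$, the equation $g'=0$ becomes $(x-\alpha)D=(x-a)N$. Expanding and using $(a-x)^2=(x-a)^2$, the cubic terms cancel and this reduces to the quadratic
\[
(\alpha-a)\,x^2+(|\zeta_1|^2-|\zeta_2|^2)\,x+\bigl(a|\zeta_2|^2-\alpha|\zeta_1|^2\bigr)=0,
\]
with $|\zeta_1|^2=a^2+b^2$ and $|\zeta_2|^2=\alpha^2+\beta^2$. Dividing by $\alpha-a$ and applying the quadratic formula, the first two coefficients already yield the term $(\alpha^2+\beta^2-a^2-b^2)/2(\alpha-a)$ of \eqref{eq:xpm}, and the substantive point is the polynomial identity
\[
(|\zeta_2|^2-|\zeta_1|^2)^2-4(\alpha-a)\bigl(a|\zeta_2|^2-\alpha|\zeta_1|^2\bigr)=\bigl((\alpha-a)^2+(\beta-b)^2\bigr)\bigl((\alpha-a)^2+(\beta+b)^2\bigr),
\]
which identifies the discriminant with the product of the two square roots appearing in \eqref{eq:xpm}. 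Expanding both sides in $a,b,\alpha,\beta$ verifies it; I expect this bookkeeping to be the most tedious step. Since the right-hand side is strictly positive for $\zeta_1\ne\zeta_2$, the two roots $x_+$ and $x_-$ are real and distinct.

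It remains to single out $x_+$ as the maximiser. At any critical point $N'D=ND'$, so
\[
g(x_\pm)=\frac{N'(x_\pm)}{D'(x_\pm)}=\frac{x_\pm-\alpha}{x_\pm-a};
\]
this is legitimate because Vieta's formulas give $(x_+-a)(x_--a)=-b^2\ne 0$. Subtracting, a direct simplification gives
\[
g(x_+)-g(x_-)=\frac{(x_--x_+)(a-\alpha)}{(x_+-a)(x_--a)}.
\]
Inserting $(x_+-a)(x_--a)=-b^2$ and $x_--x_+=S/(\alpha-a)$, where $S>0$ is the product of the two square roots in \eqref{eq:xpm}, this collapses to $g(x_+)-g(x_-)=S/b^2>0$. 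Thus $g(x_+)>g(x_-)$ regardless of the sign of $\alpha-a$, so $x_+$ is the global maximiser. The only delicate point here is that the sign of $\alpha-a$ governs which root is numerically larger, but it cancels in the final expression because of the way the $\mp$ sign is arranged in \eqref{eq:xpm}; I regard this sign bookkeeping as the main conceptual obstacle.

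Finally, when $a=\alpha$ I would use the translation invariance \eqref{eq:2} of Lemma~\ref{invariance} to reduce to $a=\alpha=0$. Then $g(x)=(x^2+\beta^2)/(x^2+b^2)$ is a monotone function of $x^2$: decreasing if $\beta>b$ and increasing if $\beta<b$. Hence its supremum over $\reals$ is attained at $x=0$ when $\beta>b$ and approached as $x\to\infty$ when $\beta<b$, which is precisely the value assigned to $x_+$ in the degenerate case.
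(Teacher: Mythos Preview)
Your proof is correct. The paper's argument differs in that it first invokes both symmetries of Lemma~\ref{invariance} to reduce to the normalised situation $a=0$, $\alpha>0$; there the rational function becomes $1+(A+Bx)/(x^2+b^2)$ with $B=-2\alpha<0$, the derivative is a single quadratic over a positive square, and the ordering $x_+<-A/B<x_-$ together with the shape of $f$ identifies $x_+$ as the maximiser, after which one translates back. You instead work directly with general $a\ne\alpha$: you derive the quadratic for the critical points without normalisation, verify the discriminant identity to match \eqref{eq:xpm}, and---this is the genuinely different step---single out the maximiser by the slick device $g(x_\pm)=N'(x_\pm)/D'(x_\pm)=(x_\pm-\alpha)/(x_\pm-a)$ combined with the Vieta relation $(x_+-a)(x_--a)=-b^2$, obtaining $g(x_+)-g(x_-)=S/b^2>0$ uniformly in the sign of $\alpha-a$. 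The paper's route is shorter once the reduction is made but leaves the ``therefore $f$ attains its maximum at $x_+$'' step somewhat implicit; your route carries more algebra (the discriminant identity) but handles both signs of $\alpha-a$ at once and makes the identification of the maximiser fully explicit. The degenerate case $a=\alpha$ is treated the same way in both.
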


\begin{proof}
Let $\zeta_1=a+ib$ and $\zeta_2=\alpha+i\beta$.
The case where $a=\alpha$ can be easily dealt with. 
The case where $\alpha<a$, from \eqref{eq:3}, by considering $-\overline{\zeta_1}$ and $-\overline{\zeta_2}$ instead of $\zeta_1$ and $\zeta_2$ respectively, is reduced to the case where $a < \alpha$.
Hence we only consider the case where $a<\alpha$.

We first assume that $a=0$.
By assumption, we have $\alpha>0$.
Set
$$
f(x)=\left|\dfrac{\zeta_2-x}{\zeta_1-x}\right|^2=\dfrac{(x-\alpha)^2+\beta^2}{x^2+b^2}
=1+\dfrac{A+Bx}{x^2+b^2},
$$
where $A=\alpha^2+\beta^2-b^2$ and $B=-2\alpha$.
Then,
$$
f'(x)=-\dfrac{Bx^2+2Ax-Bb^2}{(x^2+b^2)^2},
$$
and the critical points of $f'(x)$ are
$$
x_\pm = -\dfrac{A}{B}\pm \dfrac{\sqrt{A^2+b^2B^2}}{B}.
$$
Since ${\rm Re}(\zeta_2)=\alpha>0$ and $B=-2\alpha<0$, we have $x_+<-A/B<x_-$.
Therefore, $f(x)$ attains its maximum at
\begin{align*}
x_+
&=-\dfrac{A}{B} + \dfrac{\sqrt{A^2+b^2B^2}}{B} \\
&=\frac{\alpha^2+\beta^2-b^2}{2\alpha}-\dfrac{\sqrt{\alpha^2+(b-\beta)^2}\sqrt{\alpha^2+(b+\beta)^2}}{2\alpha}.
\end{align*}

Suppose next that $a\ne 0$ and $\alpha>a$.
From the invariance \eqref{eq:2} and the above calculation,  by considering $\zeta_1-a=ib$ and $\zeta_2-a=(\alpha-a)+i\beta$ instead of $\zeta_1$ and $\zeta_2$, we see that the maximum is  attained at 
$$
a+\frac{(\alpha-a)^2+\beta^2-b^2}{2(\alpha-a)}-\dfrac{\sqrt{(\alpha-a)^2+(b-\beta)^2}\sqrt{(\alpha-a)^2+(b+\beta)^2}}{2(\alpha-a)}
$$
which is equal to $x_+$ in \eqref{eq:xpm}.
\end{proof}

\begin{proposition}
\label{endpoint}
The points $x_+$ and $x_-$ in \cref{maximum} are the endpoints at infinity  of the hyperbolic geodesic line  in $\hyperbolic$ passing through $\zeta_2$ and  $\zeta_1$. 
The point $x_+$ lies on the side of $\zeta_1$, and $x_-$ lies on the side of $\zeta_2$. 
\end{proposition}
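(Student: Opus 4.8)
The plan is to exhibit the hyperbolic geodesic through $\zeta_1$ and $\zeta_2$ explicitly as a Euclidean half-circle orthogonal to $\reals$ (or a vertical half-line when $a=\alpha$), to compute its two endpoints on $\reals\cup\{\infty\}$, and to match them with $x_+$ and $x_-$. Recall that a geodesic line of the hyperbolic metric on $\hyperbolic$ is either a vertical half-line, with endpoints some $c\in\reals$ and $\infty$, or a Euclidean half-circle centred at a point $(c,0)$ of some radius $r>0$, and hence with endpoints $c-r$ and $c+r$. First I would dispose of the case $a=\alpha$: here the geodesic is the vertical line $\{a+it : t>0\}$, whose endpoints at infinity are $a$ and $\infty$, and after normalising $a=0$ by the translation invariance \eqref{eq:2} of \cref{invariance}, comparison with the piecewise definition of $x_\pm$ settles both assertions directly.

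For the main case I may assume $a\ne\alpha$. Writing the conditions that $\zeta_1=a+ib$ and $\zeta_2=\alpha+i\beta$ both lie on the circle centred at $(c,0)$ of radius $r$, namely $(a-c)^2+b^2=r^2$ and $(\alpha-c)^2+\beta^2=r^2$, and subtracting, one solves for the centre
\[
c=\frac{\alpha^2+\beta^2-a^2-b^2}{2(\alpha-a)},
\]
which is exactly the first term appearing in \eqref{eq:xpm}. Setting $S=\sqrt{(\alpha-a)^2+(\beta-b)^2}\,\sqrt{(\alpha-a)^2+(\beta+b)^2}$, a short computation identifies the radius as $r=S/(2|\alpha-a|)$, so that the two endpoints $c\pm r$ coincide as an unordered pair with $x_\pm=c\mp S/(2(\alpha-a))$ of \eqref{eq:xpm}. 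Alternatively, and more in keeping with the previous proof, I may normalise $a=0$ by \eqref{eq:2}; then $x_+$ and $x_-$ are the two roots of the quadratic $Bx^2+2Ax-Bb^2=0$ found in the proof of \cref{maximum}, and Vieta's formulas give their product $-b^2$ and their sum $(\alpha^2+\beta^2-b^2)/\alpha$. These agree with the product $c^2-r^2=-b^2$ (from $c^2+b^2=r^2$) and the sum $2c$ of the endpoints of the semicircle, and since a pair of real numbers is determined by its sum and product, the two pairs coincide.

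It remains to decide which endpoint sits on which side. Since the real part is strictly monotone along the semicircle, attaining its extreme values precisely at the two endpoints $c-r$ and $c+r$, and since $\zeta_1,\zeta_2$ are interior points of the arc (their imaginary parts being positive), both $a$ and $\alpha$ lie strictly between $c-r$ and $c+r$. Assuming first $\alpha>a$, the four reals are then ordered as $c-r<a<\alpha<c+r$, and the sign of the denominator in \eqref{eq:xpm} gives $x_+=c-r$ and $x_-=c+r$; thus $\zeta_1$ separates $x_+$ from $\zeta_2$ and $\zeta_2$ separates $x_-$ from $\zeta_1$ along the geodesic, which is the claim. When $\alpha<a$ the inequalities reverse, but so does the sign of $\alpha-a$ in \eqref{eq:xpm}, so $x_+$ remains the endpoint beyond $\zeta_1$ and $x_-$ the endpoint beyond $\zeta_2$.

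The algebraic identification of the centre and radius is routine. The step that requires genuine care — and which I expect to be the main obstacle — is the ``side'' assertion: one must first make precise what ``on the side of $\zeta_1$'' means (that $\zeta_1$ lies between $x_+$ and $\zeta_2$ on the geodesic), and then track the orientation correctly through the sign of $\alpha-a$, including under the reflection $\zeta\mapsto-\overline\zeta$ of \eqref{eq:3} used in the reductions, so as to be certain that the labelling of $x_+$ versus $x_-$ in \eqref{eq:xpm} is the one claimed and not its reverse.
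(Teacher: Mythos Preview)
Your proof is correct and follows essentially the same approach as the paper: identify the hyperbolic geodesic through $\zeta_1,\zeta_2$ as the Euclidean semicircle with centre $c=(\alpha^2+\beta^2-a^2-b^2)/(2(\alpha-a))$ and the appropriate radius, verify that its endpoints $c\pm r$ coincide with $\{x_+,x_-\}$, and then read off the ordering along the arc from the real parts. The paper first reduces to $a=0$, $\alpha>0$ and confirms the centre by checking $|\zeta_1-(-A/B)|=|\zeta_2-(-A/B)|$, while you solve the two circle equations directly and track both signs of $\alpha-a$; the substance is the same.
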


\begin{proof}
Set $\zeta_1=a+ib$ and $\zeta_2=\alpha+i\beta$ again.
The case where $a=\alpha$ can be easily dealt with. 
Hence, as before, we may assume that $a=0$ and $\alpha>0$, and set $A=\alpha^2+\beta^2-b^2$ and $B=-2\alpha$ as in the proof of \cref{maximum}. Then
\begin{align*}
\left|\zeta_1-\left(-\dfrac{A}{B}\right)\right|^2
&=\left|ib-\left(-\dfrac{A}{B}\right)\right|^2=\frac{A^2}{B^2}+b^2=\frac{A^2+b^2B^2}{B^2}, \\
\left|\zeta_2-\left(-\dfrac{A}{B}\right)\right|^2
&=\left|(\alpha+i\beta)-\left(-\dfrac{A}{B}\right)\right|^2 \\
&=\left(\dfrac{A}{B}+\alpha\right)^2+\beta^2=\left(\dfrac{\alpha^2+\beta^2-b^2}{-2\alpha}+\alpha\right)^2+\beta^2 \\
&=\dfrac{(\alpha^2-\beta^2+b^2)^2+4\alpha^2\beta^2}{4\alpha^2} \\
&=\dfrac{(\alpha^2+(\beta-b)^2)(\alpha^2+(\beta+b)^2)}{4\alpha^2}=\frac{A^2+b^2B^2}{B^2}.
\end{align*}
This means that $-A/B$ is the centre of the Euclidean semicircle  perpendicular to the real axis  passing through the points $\zeta_1$ and $\zeta_2$, and that $x_+, x_0$ are the endpoints of this semicircle. Since $x_+<-A/B<x_-$, $x_+$, $\zeta_1$, $\zeta_2$ and $x_-$ lie on the semicircle in this order.
Since such a semicircle is a hyperbolic geodesic, we have completed the proof.
\end{proof}

\begin{theorem}
\label{geodesic}
Hyperbolic geodesics in $\hyperbolic$ are geodesic with respect to the weak metric $\delta$.
Conversely, every geodesic with respect to $\delta$ is a hyperbolic geodesic.
\end{theorem}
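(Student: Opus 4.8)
The plan is to reduce both directions to a single algebraic observation coming from \cref{maximum} and \cref{endpoint}: for a pair $(\mu,\nu)$, the point realising the supremum in \eqref{M} is the endpoint at infinity, on the side of the first argument $\mu$, of the hyperbolic geodesic through $\mu$ and $\nu$. Throughout I work with the product form $M(\zeta_1,\zeta_3)=\sup_x\left|\frac{\zeta_3-x}{\zeta_1-x}\right|$ and the factorisation $\left|\frac{\zeta_3-x}{\zeta_1-x}\right|=\left|\frac{\zeta_3-x}{\zeta_2-x}\right|\left|\frac{\zeta_2-x}{\zeta_1-x}\right|$, which already yields the subadditivity $M(\zeta_1,\zeta_3)\le M(\zeta_1,\zeta_2)M(\zeta_2,\zeta_3)$, i.e. the triangle inequality for $\delta$.

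For the forward direction I take three points $\zeta_1,\zeta_2,\zeta_3$ on a common hyperbolic geodesic $\gamma$, occurring in the order $p,\zeta_1,\zeta_2,\zeta_3,q$, where $p,q$ are the endpoints of $\gamma$. By \cref{endpoint} the supremum in \eqref{M} is attained, for each of the pairs $(\zeta_1,\zeta_2)$, $(\zeta_2,\zeta_3)$ and $(\zeta_1,\zeta_3)$, at the \emph{same} endpoint $p$, namely the one lying on the side of the first point of each pair. Evaluating $M$ at this common point $p$ gives a telescoping product
\[
M(\zeta_1,\zeta_3)=\left|\frac{\zeta_3-p}{\zeta_1-p}\right|=\left|\frac{\zeta_3-p}{\zeta_2-p}\right|\left|\frac{\zeta_2-p}{\zeta_1-p}\right|=M(\zeta_2,\zeta_3)\,M(\zeta_1,\zeta_2),
\]
and taking logarithms yields $\delta(\zeta_1,\zeta_3)=\delta(\zeta_1,\zeta_2)+\delta(\zeta_2,\zeta_3)$. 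Thus $\delta$ is additive along hyperbolic geodesics, which is exactly the statement that a hyperbolic geodesic is a $\delta$-geodesic. The degenerate case of a vertical line is handled identically with $p\in\{0,\infty\}$.

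For the converse I must show that equality in the triangle inequality forces collinearity. Assume $\delta(\zeta_1,\zeta_3)=\delta(\zeta_1,\zeta_2)+\delta(\zeta_2,\zeta_3)$, i.e. $M(\zeta_1,\zeta_3)=M(\zeta_1,\zeta_2)M(\zeta_2,\zeta_3)$. Setting $f(x)=\left|\frac{\zeta_3-x}{\zeta_2-x}\right|$ and $g(x)=\left|\frac{\zeta_2-x}{\zeta_1-x}\right|$, the supremum defining $M(\zeta_1,\zeta_3)=\sup_x f(x)g(x)$ is attained at the point $x^\ast$ supplied by \cref{maximum}. At $x^\ast$ one has $f(x^\ast)g(x^\ast)=(\sup f)(\sup g)$; since $f(x^\ast)\le\sup f$ and $g(x^\ast)\le\sup g$, both inequalities are forced to be equalities, so $x^\ast$ maximises $f$ and $g$ simultaneously. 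The essential input is that $f$ and $g$ each have a \emph{unique} maximiser, which is read off from the computation in the proof of \cref{maximum}, where the relevant function is strictly increasing up to $x_+$ and tends to $1<f(x_+)$ at infinity. Consequently $x^\ast$ is at once the endpoint on the $\zeta_1$-side of the geodesic through $\zeta_1,\zeta_2$ and the endpoint on the $\zeta_2$-side of the geodesic through $\zeta_2,\zeta_3$; since these two geodesics both pass through $\zeta_2$ and share the endpoint $x^\ast$, they coincide, and $\zeta_1,\zeta_2,\zeta_3$ lie on one hyperbolic geodesic.

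Finally I upgrade the three-point statement to paths: given a $\delta$-geodesic with endpoints $\zeta_1,\zeta_3$, every intermediate point $\zeta_2$ satisfies the equality case above, hence lies on the unique hyperbolic geodesic joining $\zeta_1$ to $\zeta_3$; so the whole path is contained in that hyperbolic geodesic. I expect the main obstacle to be the equality analysis in the converse, specifically justifying that equality between the supremum of a product and the product of the suprema forces a common maximiser, which is precisely where the uniqueness statement extracted from \cref{maximum} is indispensable.
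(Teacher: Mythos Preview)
Your forward direction is essentially identical to the paper's: both use \cref{endpoint} to identify a common maximising endpoint and then telescope.

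Your converse, however, takes a genuinely different route. The paper does not analyse the equality case of the triangle inequality at all; instead it invokes the identity \eqref{asymmetry}, which immediately gives
\[
\delta(\zeta_3,\zeta_1)=\delta(\zeta_3,\zeta_2)+\delta(\zeta_2,\zeta_1)
\]
from $\delta(\zeta_1,\zeta_3)=\delta(\zeta_1,\zeta_2)+\delta(\zeta_2,\zeta_3)$ (the logarithmic correction terms telescope). Averaging the two equalities yields additivity for the symmetrisation $S\delta$, and since $S\delta=d_{hyp}$ with its unique geodesics, the conclusion follows in one line with no case analysis. Your argument is more hands-on: you force a common maximiser for $f$ and $g$ and then identify the two geodesics through $\zeta_2$. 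This is correct, but it needs two extra ingredients the paper avoids: (i) uniqueness of the maximiser in \cref{maximum}, which is indeed implicit in that proof but not stated; and (ii) treatment of the case where the supremum for $M(\zeta_1,\zeta_3)$ is only attained at $x^\ast=\infty$ (the vertical geodesic with $\Im(\zeta_3)<\Im(\zeta_1)$), where your ``$x^\ast$ supplied by \cref{maximum}'' is not a finite point. That last case is easy---$M(\zeta_1,\zeta_3)=1$ forces $M(\zeta_1,\zeta_2)=M(\zeta_2,\zeta_3)=1$, hence all three points lie on the same vertical line---but you should say so explicitly. In short: your approach works and is self-contained from \cref{maximum} and \cref{endpoint}, while the paper's symmetrisation trick is shorter and sidesteps both the uniqueness and the degenerate cases entirely.
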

\begin{proof}
Suppose that $\zeta_1, \zeta_2$ and $\zeta_3$ lie on a hyperbolic geodesic $\gamma$ in this order.
By \cref{endpoint}, the endpoint at infinity $x$ of $\gamma$ which lies on the side of $\zeta_1$ not containing $\zeta_2, \zeta_3$ attains the supremum of \eqref{M} for $M(\zeta_1, \zeta_2), M(\zeta_2, \zeta_3)$ and $M(\zeta_1, \zeta_3)$, provided that $x \neq \infty$.
Then by \cref{M}, we have 
$$M(\zeta_1, \zeta_2)=\frac{|\zeta_2-x|}{|\zeta_1-x|},
M(\zeta_2, \zeta_3)=\frac{|\zeta_3-x|}{|\zeta_2-x|},$$ and $$
M(\zeta_1, \zeta_3)=\frac{|\zeta_3-x|}{|\zeta_1-x|}.
$$
This implies that $M(\zeta_1, \zeta_3)=M(\zeta_1, \zeta_2)M(\zeta_2, \zeta_3)$, hence $\delta(\zeta_1, \zeta_3)=\delta(\zeta_1, \zeta_2)+\delta(\zeta_2, \zeta_3)$.
This means that $\gamma$ is a geodesic with respect to $\delta$.

If $x=\infty$, then in the same setting, we have $M(\zeta_1, \zeta_2)=M(\zeta_2, \zeta_3)=M(\zeta_1, \zeta_3)=1$, and again $\gamma$ is a geodesic with respect to $\delta$.

Conversely, suppose that $\gamma$ is a geodesic with respect to $\delta$, and let $\zeta_1, \zeta_2, \zeta_3$ be arbitrary three points lying on $\gamma$ in this order.
Then we have $\delta(\zeta_1, \zeta_3)=\delta(\zeta_1, \zeta_2)+\delta(\zeta_2, \zeta_3)$.
By \cref{asymmetry}, this implies that $\delta(\zeta_3, \zeta_1)=\delta(\zeta_3, \zeta_2)+\delta(\zeta_2, \zeta_1)$, hence $S\delta(\zeta_1,\zeta_3)=S\delta(\zeta_1, \zeta_2)+S\delta(\zeta_2, \zeta_3)$ for the arithmetic symmetrisation $S\delta$.
Since $S\delta$ coincides with the hyperbolic metric, we see that $\gamma$ is also a hyperbolic geodesic.
\end{proof}

Before discussing the connection to Teichm\"uller theory, we shall give a brief comment on this heory. The ideal boundary $\partial \hyperbolic=\partial \teich(\torus)$ is canonically identified with the Thurston compactification of $\teich(\torus)$. Recall that the Thurston compactification of $\teich(\torus)$ consists of the projective classes of measured foliations on the base surface (torus) $T_0=\complexes/(\integers+i\integers)$. A \emph{measured foliation} on $T_0$ is an equivalence class of a pair consisting of a foliation on $T_0$ together with a transverse measure. (Note in the general Thurston theory, the foliations may have singular points, where as in the case of the torus that we are discussing, the foliations are without singularities). Two such pairs are equivalent if either they are isotopic. (In the general case, one has to include Whitehead moves in the equivalence relation, but in the case of the torus, there are no such moves.) For $\alpha\in \hat{\reals}$, we define a measured foliation associated with $\alpha$ to be the pair consisting of the foliation obtained as integral curves of unit vectors satisfying $(1+|\alpha|)^{-1}(dx+\alpha dy)=0$ on $T_0$  and the transverse measure defined by $(1+|\alpha|)^{-1}|dx+\alpha dy|$. $\alpha$ is called the \emph{slope} of the foliation.
Notice that when $\alpha=\infty$, the associated measured foliation consists of the integral curves of the unit lines satisfying $dy=0$ equipped with the transverse measure  $|dy|$.

The point $x_+\in \partial \hyperbolic=\partial \teich(\torus)$ discussed at the beginning of this section corresponds to the slope of the horizontal foliation of the Teichm\"{u}ller map from $\zeta_2$ to $\zeta_1$.
Geometrically, the leaves of the horizontal foliation are stretched under the deformation  along the Teichm\"uller geodesic segment from $\zeta_1$ to $\zeta_2$ (see Figure \ref{fig:deformation_horizontal_foliation}).
\begin{figure}
\includegraphics[width = 7cm]{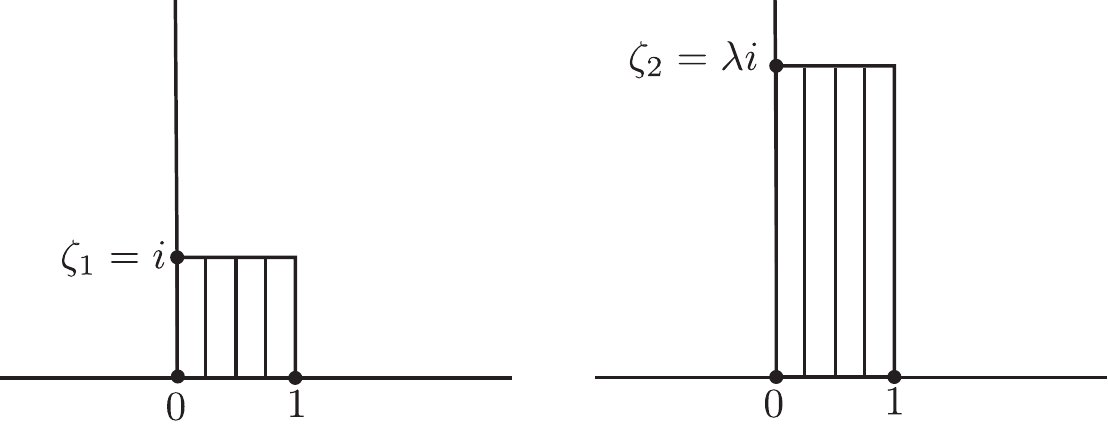}
\caption{The Teichm\"uller ray (the hyperbolic geodesic ray) from $\zeta_1=i$ to $\zeta_2=\lambda i$ with $\lambda>1$ is the vertical ray emanating from from $\zeta_1$. In this case, $x_+=0$, and the leaves of the horizontal foliation are defined by $dx=0$, which is stretched along the deformation from $\zeta_1$ to $\zeta_2$.}
\label{fig:deformation_horizontal_foliation}
\end{figure}
Combining \cref{geodesic} with \cref{kappa}, we have the following.

\begin{corollary}
Suppose that $z, z' \in \teich(\torus)$  correspond to $\zeta_1, \zeta_2 \in \hyperbolic$ respectively.
Then the distance $\kappa(z, z')=\delta(\zeta_1, \zeta_2)$ is attained by the slope of the horizontal foliation for the Teichm\"{u}ller map from $z$ to $z'$.
\end{corollary}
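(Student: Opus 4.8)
The plan is to deduce the statement by combining the facts already in place. By Theorem~3 of \cite{BPT} we have $\kappa(z,z')=\delta(\zeta_1,\zeta_2)$, and by definition $\delta(\zeta_1,\zeta_2)=\log M(\zeta_1,\zeta_2)$ with $M$ as in \eqref{M}. By \cref{maximum} the supremum defining $M$ is a genuine maximum, attained at the real point $x_+$, and by \cref{endpoint} this $x_+$ is the endpoint at infinity of the Teichm\"uller geodesic through $\zeta_1,\zeta_2$ lying on the side of $\zeta_1$, which the discussion preceding the statement identifies with the slope of the horizontal foliation. The only genuine work is to match the supremum over simple closed curves in the definition of $\kappa$ with the supremum over $\hat{\reals}$ in \eqref{M}, and to interpret ``attained'' correctly.

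First I would set up the dictionary between homotopy classes of simple closed curves and real points. On the flat torus $\Sigma_i=\complexes/(\integers+\zeta_i\integers)$, the essential simple closed curve $s$ of homotopy type $(p,q)$ with $\gcd(p,q)=1$ is represented by a Euclidean geodesic of length $\l_{\Sigma_i}(f_i(s))=|p+q\zeta_i|$. Hence, when $q\ne 0$,
\[
\frac{\l_{\Sigma_2}(f_2(s))}{\l_{\Sigma_1}(f_1(s))}
=\frac{|p+q\zeta_2|}{|p+q\zeta_1|}
=\left|\frac{\zeta_2-x}{\zeta_1-x}\right|,
\qquad x=-\frac{p}{q},
\]
while $q=0$ gives the value $1$, corresponding to $x=\infty$. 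Thus the length ratio for the curve of slope $-p/q$ is exactly the quantity maximised in \eqref{M}, evaluated at the rational point $x=-p/q$.

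Next I would pass from rational $x$ to all of $\hat{\reals}$. Because the rationals are dense in $\hat{\reals}$ and $x\mapsto|(\zeta_2-x)/(\zeta_1-x)|$ is continuous, the supremum over $s\in\SSS(\torus)$ in the definition of $\kappa$ coincides with the supremum over $x\in\reals$ in \eqref{M}. By \cref{maximum} this value is attained at $x_+$, so that $\kappa(z,z')=\delta(\zeta_1,\zeta_2)=\log\left|(\zeta_2-x_+)/(\zeta_1-x_+)\right|$, and curves whose slopes tend to $x_+$ realise the distance in the limit (a single curve doing so exactly when $x_+$ is rational). Invoking \cref{endpoint} and the identification recalled above, $x_+$ is precisely the slope of the horizontal foliation of the Teichm\"uller map from $z$ to $z'$, namely the direction stretched under the deformation, which is the assertion.

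The step needing the most care is this last passage from a supremum over the discrete set of simple closed curves to a genuine maximum over $\hat{\reals}$, together with the reading of ``attained by the slope.'' When $x_+$ is irrational there is no simple closed curve of that slope, so the statement must be understood in the limit: the extremal value is realised by simple closed curves whose slopes converge, in the space of projective measured foliations, to the (in general non-closed) horizontal foliation. Pinning down this limiting formulation, and checking the orientation convention so that it is $x_+$ rather than $x_-$ that names the stretched horizontal foliation for the map from $z$ to $z'$, is the main point to confirm.
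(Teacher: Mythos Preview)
Your argument is correct and follows the same route as the paper, which gives no formal proof and simply says ``Combining \cref{geodesic} with \eqref{kappa}'' together with the preceding paragraph identifying $x_+$ with the slope of the horizontal foliation. Your version is a faithful elaboration of that one-line justification, with the added merit of making explicit the dictionary $(p,q)\leftrightarrow x=-p/q$ and the subtlety that ``attained'' must be read as a limit over simple closed curves when $x_+$ is irrational---a point the paper leaves implicit.
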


\section{The weak Finsler structure of the weak metric $\delta$}
We recall now the notion of weak norm and weak Finsler metric on a manifold, adapted to the case we are dealing with. 
We start with a \emph{weak norm} on a finite-dimensional vector space $V$. This is a map $V\to [0,\infty)$, $v\mapsto \|v\|$, satisfying
\begin{enumerate}
\item $\|0\|=0$;
 \item $ \|v\|\geq 0$ for all $v$ in $V$;
 \item $\|v+v'\|\leq \|v\|+\|v'\|$ for all $v$ in $V$.
\end{enumerate}

A metric on a smooth manifold $M$ is said to be \emph{weak Finsler} if $M$ is equipped with a continuous field of weak norms defined on the tangent space at each point of $M$ such that the distance between two points in $M$ is equal to the infimum of the lengths of piecewise $C^1$-paths joining them, the length of such a path being computed as the integral over this path of the weak norms of the tangent vectors.
 
In this section, we show that the weak metric $\delta$ on $\teich(\torus)$ is weak Finsler and we give a description of its induced weak norm on the  tangent space of each point in this space.
We start with the following proposition.
\begin{proposition}
\label{norm}
Let $\zeta$ be a point in $\hyperbolic$, and $v$ a tangent vector at $\zeta$. The weak metric $\delta$ induces on $v$ a weak norm  $\|v\|_\delta$ expressed by
$$\|v\|_\delta=\dfrac{|v|+\Im(v)}{2\Im(\zeta)}.$$
\end{proposition}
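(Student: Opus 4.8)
The plan is to compute $\|v\|_\delta$ directly as the forward directional derivative of the weak metric, namely
\[
\|v\|_\delta=\lim_{t\to 0^+}\frac{\delta(\zeta,\zeta+tv)}{t},
\]
and to evaluate this limit using the explicit formula \eqref{delta}. Setting $\zeta_1=\zeta$ and $\zeta_2=\zeta+tv$ with $t>0$ small, two of the three quantities in \eqref{delta} are immediate: $|\zeta_2-\zeta_1|=t\,|v|$ and $|\zeta_1-\bar\zeta_1|=2\Im(\zeta)$. The only term needing work is $|\zeta_2-\bar\zeta_1|$, where $\zeta_2-\bar\zeta_1=2i\,\Im(\zeta)+tv$; its square expands to $4\,\Im(\zeta)^2+4t\,\Im(\zeta)\,\Im(v)+t^2|v|^2$.

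First I would take the square root of this radicand and expand it to first order in $t$, obtaining $|\zeta_2-\bar\zeta_1|=2\,\Im(\zeta)+t\,\Im(v)+O(t^2)$. Substituting into the argument of the logarithm in \eqref{delta} gives
\[
\frac{|\zeta_2-\bar\zeta_1|+|\zeta_2-\zeta_1|}{|\zeta_1-\bar\zeta_1|}
=1+\frac{t\bigl(\Im(v)+|v|\bigr)}{2\,\Im(\zeta)}+O(t^2),
\]
so that, using $\log(1+u)=u+O(u^2)$, we get $\delta(\zeta,\zeta+tv)=\dfrac{t\bigl(\Im(v)+|v|\bigr)}{2\,\Im(\zeta)}+O(t^2)$. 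Dividing by $t$ and letting $t\to 0^+$ yields the asserted formula $\|v\|_\delta=\dfrac{|v|+\Im(v)}{2\,\Im(\zeta)}$.

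It then remains to verify that this expression genuinely defines a weak norm in the sense of the three axioms recalled above. The vanishing at $v=0$ is clear, and nonnegativity follows from the elementary chain $|v|\ge|\Im(v)|\ge-\Im(v)$ together with $\Im(\zeta)>0$. Subadditivity follows from the triangle inequality $|v+v'|\le|v|+|v'|$ for the modulus and the additivity of $\Im$, since both summands in the numerator are then subadditive in the vector argument.

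The computation is routine; the only point deserving a word of care is the justification that the first-order expansion of the square root is valid and that the remainder is genuinely $O(t^2)$ as $t\to 0^+$. This is immediate here because the radicand is a smooth, strictly positive function of $t$ in a neighbourhood of $t=0$, so its square root is smooth there and its Taylor expansion is legitimate. I expect this to be the only mild obstacle, and it is entirely standard.
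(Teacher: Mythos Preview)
Your proof is correct and follows essentially the same approach as the paper: both compute the forward directional derivative $\lim_{t\to 0^+}\delta(\zeta,\zeta+tv)/t$ by expanding the three moduli in formula~\eqref{delta} to first order in $t$ and then using $\log(1+u)=u+o(u)$. The only cosmetic difference is that the paper factors $|\zeta'-\bar\zeta|=|\zeta-\bar\zeta|\,|1+tv/(\zeta-\bar\zeta)|$ and expands the second factor, whereas you square and take a Taylor expansion of the square root; your added verification of the weak-norm axioms is a welcome supplement that the paper leaves implicit.
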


The meaning of the expression ``induced weak norm" will be clear from the computation done in the proof, and it acquires its complete significance in \cref{Finsler} which follows.
\begin{proof}
Set $\zeta'=\zeta+tv$ ($t>0$). Then,
\begin{align*}
|\zeta'-\zeta|&=|\zeta+tv-\zeta|=t|v|; \\
|\zeta'-\overline{\zeta}|&
= |\zeta-\overline{\zeta}+tv| = |\zeta-\overline{\zeta}|\left|1+t\dfrac{v}{\zeta-\overline{\zeta}}\right| \\
&=|\zeta-\overline{\zeta}|\left(1+t{\rm Re}\left(\dfrac{v}{\zeta-\overline{\zeta}}\right)
+o(t)\right)\\
&=|\zeta-\overline{\zeta}|\left(1+t{\rm Re}\left(\dfrac{v}{2i{\rm Im}(\zeta)}\right)
+o(t)\right)\\
&=2{\rm Im}(\zeta)\left(1+t\dfrac{{\rm Im}(v)}{2{\rm Im}(\zeta)}
+o(t)\right).
\end{align*}
Hence, we have
\begin{align*}
\delta(\zeta,\zeta+tv)
&=\log\dfrac{1}{2{\rm Im}(\zeta)}\left(t|v| +2{\rm Im}(\zeta)\left(1+t\dfrac{{\rm Im}(v)}{2{\rm Im}(\zeta)}
+o(t)\right)\right) \\
&=\log\left(1+t\dfrac{|v|+{\rm Im}(v)}{2{\rm Im}(\zeta)}
+o(t)\right)\\
&=t\dfrac{|v|+{\rm Im}(v)}{2{\rm Im}(\zeta)}
+o(t).
\end{align*}
Thus, we obtain
$$
\lim_{t\searrow +0}\dfrac{\delta(\zeta,\zeta+tv)}{t}=\dfrac{|v|+{\rm Im}(v)}{2{\rm Im}(\zeta)}.
$$
\end{proof}
Notice that as an invariant expression, the weak metric in \cref{norm} is presented as
\begin{equation}
\label{invariant_expression}
\|\cdot\|_\delta=\dfrac{\sqrt{dx^2+dy^2}+dy}{2y}=ds_{hyp}+\dfrac{1}{2}d\log y 
\end{equation}
on $T_\zeta\hyperbolic$ and $\zeta=x+iy\in \hyperbolic$, where $ds_{hyp}$ is the length element of the hyperbolic metric on $\hyperbolic$ of constant curvature $-4$.

Consider now the hyperbolic space $ \hyperbolic$ equipped with the hyperbolic distance (of constant curvature $-4$)  and let $\gamma$ be an isometrically parametrised geodesic emanating from a point $\zeta_0$ in $ \hyperbolic$ and converging to a point $x_0\in \partial \hyperbolic$. We recall that the associated \emph{Busemann function} associated with the geodesic ray $\gamma$ is defined by
$$
\hyperbolic \ni \zeta\mapsto \lim_{t\to \infty}\left(d_{hyp}(\zeta,\gamma(t))-t\right)
$$
(cf. \cite[Chapter 12]{MR3156529}).
Combining this with \cref{geodesic}, we have the following corollary.

\begin{corollary}
\label{Finsler}
The weak metric space $(\teich(\torus), \delta)$ is a weak Finsler metric space with the corresponding weak norm $\|\cdot \|_\delta$ given in \cref{norm}.
\end{corollary}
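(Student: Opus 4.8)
The plan is to compute the $\delta$-length of an arbitrary piecewise $C^1$ path directly from the weak norm of \cref{norm}, and then to minimise it over all such paths joining two prescribed points. Writing the norm in the invariant form \eqref{invariant_expression}, namely $\|\cdot\|_\delta = ds_{hyp} + \tfrac12\,d\log y$, the $\delta$-length of a path $\gamma\colon[0,1]\to\hyperbolic$ running from $\zeta_1$ to $\zeta_2$ separates into two pieces:
\[
L_\delta(\gamma)=\int_0^1\|\gamma'(t)\|_\delta\,dt
=\int_\gamma ds_{hyp}+\frac12\int_\gamma d\log y
=L_{hyp}(\gamma)+\frac12\log\frac{\Im(\zeta_2)}{\Im(\zeta_1)},
\]
where $L_{hyp}$ denotes hyperbolic length. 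Before carrying this out I would record that the field $\zeta\mapsto\|\cdot\|_\delta$ is continuous on $\hyperbolic$, which is immediate from the explicit formula in \cref{norm}, so that the weak Finsler length functional is well defined.

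The key observation is that $\tfrac12\,d\log y$ is an exact $1$-form, so the second integral above depends only on the endpoints of $\gamma$ and not on the path itself. Consequently, minimising $L_\delta(\gamma)$ over all piecewise $C^1$ paths from $\zeta_1$ to $\zeta_2$ reduces to minimising the hyperbolic length $L_{hyp}(\gamma)$, whose infimum is the hyperbolic distance $d_{hyp}(\zeta_1,\zeta_2)$, attained along the hyperbolic geodesic joining the two points; by \cref{geodesic} this geodesic is simultaneously a $\delta$-geodesic. Hence the distance induced by the weak norm is
\[
\inf_{\gamma}L_\delta(\gamma)=d_{hyp}(\zeta_1,\zeta_2)+\frac12\log\frac{\Im(\zeta_2)}{\Im(\zeta_1)}.
\]

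It then remains to verify that this infimum coincides with $\delta(\zeta_1,\zeta_2)$, which is exactly the weak Finsler condition. Combining the fact that the arithmetic symmetrisation $S\delta$ equals $d_{hyp}$ with the asymmetry relation \eqref{asymmetry} yields
\[
d_{hyp}(\zeta_1,\zeta_2)=S\delta(\zeta_1,\zeta_2)
=\delta(\zeta_1,\zeta_2)+\frac12\log\frac{\Im(\zeta_1)}{\Im(\zeta_2)},
\]
so that $\delta(\zeta_1,\zeta_2)=d_{hyp}(\zeta_1,\zeta_2)+\tfrac12\log\bigl(\Im(\zeta_2)/\Im(\zeta_1)\bigr)$, which matches the infimum computed above. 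I expect the genuine content of the argument to lie not in any hard estimate but in this last reconciliation: the length functional produces \emph{some} weak metric by the decoupling above, and the main point is to see that it is $\delta$ itself rather than merely a length metric comparable to it. A subsidiary point requiring care is that the weak norm is genuinely asymmetric, so one must integrate the forward tangent $\gamma'$ throughout; the sign in the exact term $\tfrac12\,d\log y$ is precisely what records this asymmetry and makes the endpoint contribution change sign when the path is reversed, in agreement with \eqref{asymmetry}.
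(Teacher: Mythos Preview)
Your argument is correct and follows the same opening move as the paper: both use the invariant expression \eqref{invariant_expression} to split the $\delta$-length of a path into its hyperbolic length plus the exact term $\tfrac12\,d\log y$, so that the infimum over paths is $d_{hyp}(\zeta_1,\zeta_2)+\tfrac12\log\bigl(\Im(\zeta_2)/\Im(\zeta_1)\bigr)$, attained along the hyperbolic geodesic. The paper phrases the exact term as a difference of Busemann functions for the point $\infty\in\partial\hyperbolic$, but this is the same observation.

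The genuine divergence is in how the two proofs identify this infimum with $\delta(\zeta_1,\zeta_2)$. The paper parametrises the hyperbolic geodesic explicitly as a Euclidean semicircle, integrates $\|\dot\gamma\|_\delta$ by a trigonometric computation, and simplifies the result to the closed formula \eqref{delta} (treating the vertical-geodesic case separately). You instead invoke two facts already recorded in \S\ref{Preliminaries}, namely $S\delta=d_{hyp}$ and the asymmetry relation \eqref{asymmetry}, to deduce $\delta(\zeta_1,\zeta_2)=d_{hyp}(\zeta_1,\zeta_2)+\tfrac12\log\bigl(\Im(\zeta_2)/\Im(\zeta_1)\bigr)$ directly. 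Your route is shorter and more conceptual, and it avoids the case split on $\mathrm{Re}(\zeta_1)=\mathrm{Re}(\zeta_2)$; the paper's explicit integration, on the other hand, gives an independent check of \eqref{delta} and does not rely on the symmetrisation result quoted from \cite{BPT}.
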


\begin{proof}
We first show that
\begin{equation}
\label{integration_1}
\int_{\theta_1}^{\theta_2}\|\dot{\gamma}(\theta)\|_\delta d\theta\ge \delta(\zeta_1,\zeta_2)
\end{equation}
for $\zeta_1,\zeta_2\in \hyperbolic$ and any piecewise $C^1$-path $\gamma\colon [\theta_1,\theta_2]\to \hyperbolic$ connecting $\zeta_1$ to $\zeta_2$. Indeed, from the invariant expression \eqref{invariant_expression}, the integration in the left-hand side of \eqref{integration_1} is at least equal to the hyperbolic distance between $\zeta_1$ and $\zeta_2$ minus the difference of the Busemann functions at $\zeta_1$, $\zeta_2$ (see \eqref{Busemannfunction}), which is the right-hand side of \eqref{invariant_expression} (cf. \S\ref{sec:Deforming_Teichmuller_metric}). This observation also implies that the integration in the left-hand side of \eqref{integration_1} is minimised only when it is done along the hyperbolic geodesic from $\zeta_1$ to $\zeta_2$.

We now show that the distance between any two points $\zeta_1$ and $\zeta_2\in \hyperbolic$ is given by integrating the weak norm $\|\cdot \|_\delta$ along a parametrised geodesic joining these two points.

To this end, we first assume that ${\rm Re}(\zeta_1)\ne {\rm Re}(\zeta_2)$.
As in the proof of \cref{maximum}, we may assume that $\zeta_1=ib$ and $\zeta_2=\alpha+i\beta$ with $\alpha>0$. Let $A=\alpha^2+\beta^2-b^2$, $B=-2\alpha$ and $R=\sqrt{A^2+b^2B^2}/|B|$. Define $\theta_1,\theta_2\in (0,2\pi)$ by $e^{i\theta_1}=(\zeta_1-(-A/B))/R$ and $e^{i\theta_2}=(\zeta_2-(-A/B))/R$. 
Note that $\theta_2 < \theta_1$.
The geodesic from $\zeta_1$ to $\zeta_2$ is parametrised as $\gamma(\theta)=(-A/B)+Re^{i(\theta_1+\theta_2-\theta)}$ ($\theta_2\le \theta\le \theta_1$).  Hence by setting $\phi$ to be $\theta_1+\theta_2-\theta$, we have
\begin{align*}
\int_{\theta_2}^{\theta_1}\|\dot{\gamma}(\theta)\|_\delta d\theta
&=\int_{\theta_2}^{\theta_1}\dfrac{|\dot{\gamma}(\theta)|+{\rm Im}(\dot{\gamma}(\theta))}{2{\rm Im}(\gamma(\theta))}d\theta =\int_{\theta_2}^{\theta_1}\dfrac{R-R\cos (\theta_1+\theta_2-\theta)}{2R\sin(\theta_1+\theta_2-\theta)}d\theta \\
&=\int_{\theta_2}^{\theta_1}\dfrac{1-\cos(\phi)}{2\sin(\phi)}d\phi
=\dfrac{1}{2}\log(1+\cos\theta_2)-\dfrac{1}{2}\log(1+\cos \theta_1) \\
&=\dfrac{1}{2}\left(\log\left(1-\dfrac{2\alpha^2-A}{\sqrt{A^2+b^2B^2}}\right)-\log\left(1+\dfrac{A}{\sqrt{A^2+b^2B^2}}\right)\right).
\end{align*}
An easy calculation shows that this expression is equal to 
\[\log\dfrac{\sqrt{\alpha^2+(\beta-b)^2}+\sqrt{\alpha^2+(\beta+b)^2}}{2b}\]
that is, to $\delta(\zeta_1,\zeta_2).$
This gives what we wanted.

We now suppose that ${\rm Re}(\zeta_1)={\rm Re}(\zeta_2)$. Then $\delta(\zeta_1,\zeta_2)$ is equal to zero when ${\rm Im}(\zeta_2)<{\rm Im}(\zeta_1)$ and to $\delta(\zeta_1,\zeta_2)=d_{hyp}(\zeta_1,\zeta_2)$ otherwise.
From \eqref{invariant_expression}, $\|v\|_\delta$ is equal to zero when ${\rm Im}(v)<0$ and to $\|v\|_{hyp}:=\sqrt{ds^2_{hyp}(v,v)}$ otherwise. Hence, the integral of the $\delta$-norm along the hyperbolic geodesic connecting from $\zeta_1$ to $\zeta_1$ coincides with the $\delta$-distance from $\zeta_1$ to $\zeta_2$.
\end{proof}

Next we describe the unit circle in the tangent space with respect to the weak norm $\|\cdot \|_\delta$.

\begin{proposition}
\label{sphere}
The unit circle of the tangent space at $\zeta\in \hyperbolic$ with respect to $\|\cdot \|_\delta$ is expressed as a parabola with focus at the origin and vertex at $i{\rm Im}(\zeta)$.
\end{proposition}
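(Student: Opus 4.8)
The plan is to compute the unit circle $\{v : \|v\|_\delta = 1\}$ directly from the explicit formula in \cref{norm}, namely $\|v\|_\delta = (|v| + \Im(v))/(2\Im(\zeta))$, and then recognise the resulting locus as a parabola. First I would write $v = \xi + i\eta$ with $\xi = \Re(v)$, $\eta = \Im(v)$, and set $y = \Im(\zeta)$. The unit-circle condition becomes
\begin{equation*}
\frac{\sqrt{\xi^2+\eta^2} + \eta}{2y} = 1,
\end{equation*}
that is, $\sqrt{\xi^2+\eta^2} = 2y - \eta$. To solve for the locus I would square both sides (keeping track of the constraint $2y - \eta \ge 0$, which is automatically satisfied on the curve since $\sqrt{\xi^2+\eta^2}\ge 0$): this yields $\xi^2 + \eta^2 = 4y^2 - 4y\eta + \eta^2$, and the $\eta^2$ terms cancel, leaving
\begin{equation*}
\xi^2 = 4y^2 - 4y\eta = -4y\bigl(\eta - y\bigr).
\end{equation*}
This is manifestly the equation of a parabola in the $(\xi,\eta)$-plane opening downward (in the direction of decreasing $\eta$), with axis the $\eta$-axis $\{\xi = 0\}$.

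The remaining work is purely to identify the focus and vertex and match them to the stated claim. The vertex is the point on the parabola on its axis of symmetry; setting $\xi = 0$ gives $\eta = y = \Im(\zeta)$, so the vertex is at $i\,\Im(\zeta)$ in the tangent space, as asserted. To locate the focus I would put the equation in the standard form $\xi^2 = -4p(\eta - y)$, from which $4p = 4y$, hence $p = y = \Im(\zeta)$; the focus lies a distance $p$ from the vertex along the axis in the direction the parabola opens, i.e. at $\eta = y - p = 0$, $\xi = 0$, which is the origin of the tangent space. This confirms both the focus at the origin and the vertex at $i\,\Im(\zeta)$.

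I would also note, as a sanity check that fixes the correct branch, that the curve passes through the two points where $|v| = \pm\Im(v)$ degenerate: along the upward imaginary direction $\xi = 0,\ \eta > 0$ one has $\|v\|_\delta = \eta/y$, giving the vertex $\eta = y$, while along the downward imaginary direction $\eta < 0$ one has $\|v\|_\delta = 0$, consistent with the parabola receding to infinity as $\eta \to -\infty$. This asymmetry between the two imaginary directions is exactly the weak (non-symmetric) character of $\|\cdot\|_\delta$ and explains geometrically why the unit circle is an unbounded parabola rather than a bounded convex curve: directions with $\Im(v) < 0$ are ``cheap'' and directions with $\Im(v) > 0$ are ``expensive''.

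I do not expect any genuine obstacle here; the computation is elementary and self-contained once \cref{norm} is granted. The only point requiring a little care is the branch/sign bookkeeping when squaring the defining equation, to be sure that the full algebraic parabola $\xi^2 = -4y(\eta - y)$ coincides with the unit circle and that no spurious branch is introduced; the check in the previous paragraph handles this. It would also be worth remarking that the focus of the parabola sits precisely at the origin of the tangent space, which is the natural basepoint, and that the picture is compatible with the invariant expression \eqref{invariant_expression}: the parabola is the boundary of the convex region $\{ds_{hyp} + \tfrac12 d\log y \le 1\}$.
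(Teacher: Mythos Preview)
Your proof is correct and follows essentially the same approach as the paper: write the unit-circle condition from \cref{norm}, square it to eliminate the radical, and read off the parabola $\xi^2=-4y(\eta-y)$. You go a bit further than the paper in explicitly verifying the focus via the standard form $\xi^2=-4p(\eta-y)$ and in checking the branch, but the core argument is identical.
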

\begin{proof}
Let $\zeta=\alpha+i\beta$.
When $v=v_1+iv_2\in \complexes\cong T_\zeta\hyperbolic$ (as real vector spaces) lies on the unit circle of the tangent space at $\zeta$, we have
$$
1=\dfrac{|v|+{\rm Im}(v)}{2{\rm Im}(\zeta)}=\dfrac{\sqrt{v_1^2+v_2^2}+v_2}{2\beta},
$$
which is equivalent to
$$
v_1^2+v_2^2=(2\beta-v_2)^2=4\beta^2-4\beta v_2 +v_2^2.
$$
This means that the unit tangent circle at $\zeta=\alpha+i\beta\in \hyperbolic$ is the parabola
$$
v_2 = -\dfrac{v_1^2}{4\beta}+\beta,
$$
which implies the desired result.
\end{proof}

Note that the fact that the unit tangent circle of the weak Finsler norm has an infinite direction expresses the fact that the distance function is degenerate in this direction (that is, we have, in this direction,  $\delta(x,y)=0$ for   $x\not=y$).

\section{Deforming $\delta$ to the Teichm\"{u}ller metric}
\label{sec:Deforming_Teichmuller_metric}
In this section, we consider a family of weak Finsler metrics which interpolate between $\delta$ and the hyperbolic distance (which, as is well known,  coincides with the Teichm\"uller distance). We then describe the unit tangent circle of each of these metrics.

Consider the family of weak metrics $\delta_p$ $(0\le p\le 1)$ defined by
\begin{align}
\delta_p(\zeta_1,\zeta_2)
&=\log 
\sup_{x\in \reals}
\dfrac{({\rm Im}(\zeta_1))^{p/2}}{({\rm Im}(\zeta_2))^{p/2}}\left|\dfrac{\zeta_2-x}{\zeta_1-x}\right| \nonumber \\
&=
\dfrac{p}{2}
\log\dfrac{{\rm Im}(\zeta_1)}{{\rm Im}(\zeta_2)}+
\log 
\sup_{x\in \reals}\left|\dfrac{\zeta_2-x}{\zeta_1-x}\right|.
\label{eq:distance_p}
\end{align}
Note that the function
\begin{equation}
\label{Busemannfunction}
\hyperbolic\ni \zeta\mapsto
\dfrac{1}{2}\log\dfrac{{\rm Im}(\zeta_0)}{{\rm Im}(\zeta)}=-\dfrac{1}{2}\int_{\zeta_0}^{\zeta}d\log y
\end{equation}
is the Busemann function associated with the geodesic ray emanating some fixed point $\zeta_0\in \hyperbolic$ converging to $x=\infty\in \partial \hyperbolic$ of the hyperbolic metric of curvature $-4$, which is the Teichm\"uller distance. Hence, the function
$$
\dfrac{1}{2}\log\dfrac{{\rm Im}(\zeta_1)}{{\rm Im}(\zeta_2)}
=
\dfrac{1}{2}\log\dfrac{{\rm Im}(\zeta_0)}{{\rm Im}(\zeta_2)}
-
\dfrac{1}{2}\log\dfrac{{\rm Im}(\zeta_0)}{{\rm Im}(\zeta_1)}
$$
appears in \eqref{eq:distance_p} is the difference of the Busemann functions.
Using the same proof as in \cref{geodesic}, the hyperbolic geodesic from $\zeta_1$ to $\zeta_2$ is the geodesic of the metric $\delta_p$. The arithmetic symmetrisation of $\delta_p$ is the hyperbolic metric of curvature $-4$, like for $\delta=\delta_0$ (cf. (1) in \S\ref{Preliminaries}).

As we did in \cref{norm}, we can calculate the infinitesimal form of the metric $\delta_p$:

Let $\zeta\in \hyperbolic$. For $v\in T_\zeta\hyperbolic\cong \complexes$,
\begin{align*}
\log\dfrac{{\rm Im}(\zeta)}{{\rm Im}(\zeta+tv)}
&= -\log\left(
1+t\dfrac{{\rm Im}(v)}{{\rm Im}(\zeta)}\right) =-t\dfrac{{\rm Im}(v)}{{\rm Im}(\zeta)}+o(t)
\end{align*}
as $t\searrow +0$. We obtain
\begin{align*}
\|v\|_{\delta_p}:=\lim_{t\searrow +0}\dfrac{\delta_p(\zeta,\zeta+tv)}{t}
&=-\dfrac{p}{2}\dfrac{{\rm Im}(v)}{{\rm Im}(\zeta)}+\dfrac{|v|+{\rm Im}(v)}{2{\rm Im}(\zeta)}
\\
&=
\dfrac{|v|+(1-p){\rm Im}(v)}{2{\rm Im}(\zeta)} \\
&=\|v\|_{\delta}-p\dfrac{{\rm Im}(v)}{2{\rm Im}(\zeta)}.
\end{align*}
Notice that $\|v\|_{\delta_p}>0$ when $v\ne 0$ and $p>0$.
The unit tangent circle with respect to the weak norm $\|\cdot \|_{\delta_p}$ in the tangent space $T_\zeta\hyperbolic$ is the ellipse with foci $0$ and $-4{\rm Im}(\zeta)(1-p)/(p(2-p))$ (see Figure \ref{fig:infinitesimal_circle}).
\begin{figure}
\includegraphics[width = 5cm]{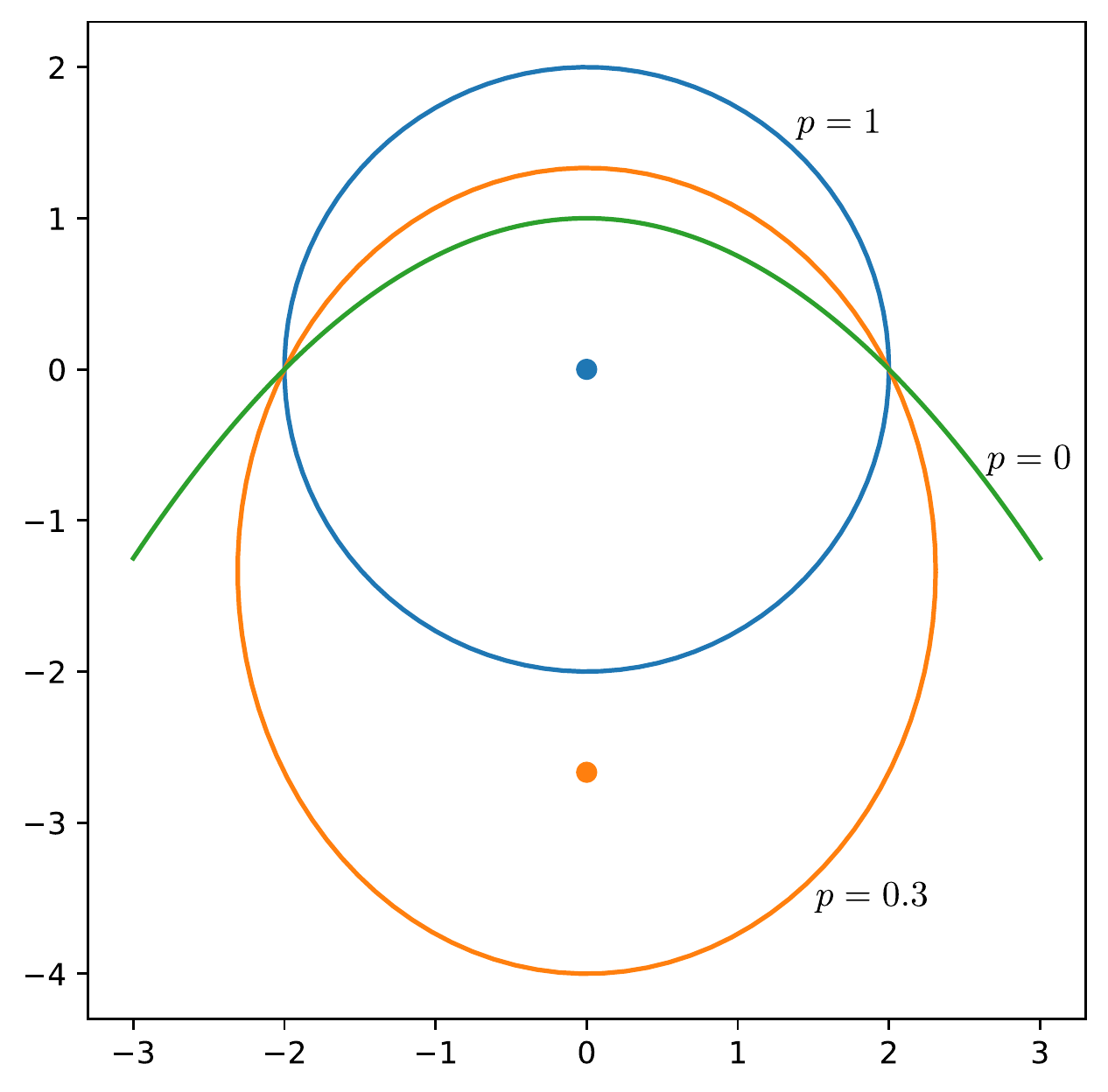}
\caption{The infinitesimal unit circle of the weak norm $\|\cdot \|_{\delta_p}$ at $\zeta=0$ for $p=1$, $0.3$, and $0$.
Each infinitesimal unit circle has the origin as a focus, and the lower dot is another focus for the infinitesimal unit circle for $p=0.3$.}
\label{fig:infinitesimal_circle}
\end{figure}
As an invariant expression, the weak metric $\|\cdot\|_{\delta_p}$ is presented as
\begin{equation}
\label{invariant_expression2}
\|\cdot\|_{\delta_p}=\dfrac{\sqrt{dx^2+dy^2}+(1-p)dy}{2y}=ds_{hyp}+\dfrac{1-p}{2}d\log y. 
\end{equation}
%
%
%
%
A discussion similar to that of the proof of \cref{Finsler} and \eqref{invariant_expression2} shows that the weak metric space $(\teich(\torus), \delta_p)$ is a weak Finsler metric with associated weak norm $\|\cdot \|_{\delta_p}$ and
that 
the hyperbolic geodesic from $\zeta_1$ to $\zeta_2\in \hyperbolic$ is a unique geodesic for $\delta_p$.
Notice that $\|\cdot\|_{\delta_1}$ is the norm from the hyperbolic metric. From \eqref{invariant_expression2},
$\|v\|_{\delta_1}\le \|v\|_{\delta_p}^2\le (2-p)\|v\|_{\delta_1}$ for $0<p\le 1$.
Hence $\delta_p$ and $\delta_{q}$ are bi-Lipschitz-equivalent for $0<p,q\le 1$. In particular, for $0<p\le 1$, $\delta_p$ is complete and separates two points in $\hyperbolic$.

It follows that $\{\delta_p\}_{0\le p\le 1}$ is a continuous family of weak Finsler metrics  giving a deformation  from $\delta=\delta_0$ to the hyperbolic metric $\delta_1$ (which is Teichm\"uller metric). Notice that 
$$
\dfrac{|\zeta-x|^2}{{\rm Im}(\zeta)}
$$
coincides with the extremal length of the measured foliation corresponding to $x\in \reals$ up to a constant factor (depending only on $x$). 
(We recall that the extremal length of a simple closed curve $c$ on a Riemann surface is defined to be the infimum of the reciprocals of the moduli of the annuli whose core curves are homotopic to $c$, and the extremal length function can be extended continuously to the space of measured foliations.)
 Hence, $\delta_1$ coincides with Kerckhoff's formula for the Teichm\"uller distance \cite{Ke} adapted to the case of the torus.

 \bigskip
 
 \noindent {\bf Acknowledgements} The authors would like to thank the referee for his careful reading and his insightful remarks and suggestions which improved the paper.

\bibliographystyle{acm}
\bibliography{mop.bib}

\end{document}